\newtheorem{theorem}{Theorem}
\newtheorem{lemma}[theorem]{Lemma}
\newtheorem{corollary}[theorem]{Corollary}
\newtheorem{proposition}[theorem]{Proposition}
\newtheorem{question}[theorem]{Question}
\theoremstyle{definition}
\newtheorem{definition}[theorem]{Definition}
\newtheorem{remark}[theorem]{Remark}
\newtheorem*{theorem*}{Theorem}
\numberwithin{equation}{section} \numberwithin{figure}{section}
\numberwithin{equation}{section}
\DeclareRobustCommand{\rchi}{{\mathpalette\irchi\relax}}
\newcommand{\irchi}[2]{\raisebox{\depth}{$#1\chi$}}
\newcommand{\Q}{\mathbb{Q}}
\newcommand{\E}{\E_{\infty}}
\author{Manuel Rivera, Felix Wierstra, Mahmoud Zeinalian}
\newcommand{\Addresses}{{% additional braces for segregating \footnotesize
  \bigskip
  \footnotesize

\textsc{Manuel Rivera, Department of Mathematics, Purdue University, West Lafayette, IN 47907}\par\nopagebreak \textit{E-mail address} \texttt{manuelr@purdue.edu}
  
    \medskip
  \medskip %On the non-existence of cocommutative models for non-simply connected rational homotopy theory

  \textsc{Felix Wierstra, Department of Mathematics, Stockholm University, Kr\"aftriket 6, 106 91 Stockholm, Sweden}\par\nopagebreak
  \textit{E-mail address} \texttt{felix.wierstra@gmail.com}

  \medskip
  \medskip
  
  \textsc{Mahmoud Zeinalian, Department of Mathematics, City University of New York, Lehman College, 250 Bedford Park Blvd W, Bronx, NY 10468
   }\par\nopagebreak
  \textit{E-mail address} \texttt{mahmoud.zeinalian@lehman.cuny.edu}

}}
\begin{document}

\title[]{Rational homotopy equivalences and singular chains}
\maketitle

\begin{abstract}
Bousfield and Kan's  $\Q$-completion and fiberwise $\Q$-completion of spaces lead to two different approaches to the rational homotopy theory of non-simply connected spaces. In the first approach, a map is a weak equivalence if it induces an isomorphism on rational homology. In the second, a map of path-connected pointed spaces is a weak equivalence if it induces an isomorphism between fundamental groups and higher rationalized homotopy groups; we call these maps $\pi_1$-rational homotopy equivalences. In this paper, we compare these two notions and show that $\pi_1$-rational homotopy equivalences correspond to maps that induce $\Omega$-quasi-isomorphisms on the rational singular chains, i.e. maps that induce a quasi-isomorphism after applying the cobar functor to the dg coassociative coalgebra of rational singular chains. This implies that both notions of rational homotopy equivalence can be deduced from the rational singular chains by using different algebraic notions of weak equivalences: quasi-isomorphism and $\Omega$-quasi-isomorphisms. We further show that, in the second approach, there are no dg coalgebra models of the chains that are both strictly cocommutative and coassociative.%, giving a negative answer to a formulation of the cocommutative chains problem.

%We introduce the notion of a $\pi_1$-rational homotopy equivalence as a continuous map between pointed path-connected spaces that induces a weak homotopy equivalence between Bousfield-Kan fiberwise $\Q$-completions or, equivalently, a map that induces an isomorphism between fundamental groups and between higher rationalized homotopy groups. We prove that a continuous map is a $\pi_1$-rational homotopy equivalence if and only if the induced map between dg coassociative coalgebras of pointed normalized singular chains with rational coefficients is an $\Omega$-quasi-isomorphism, which means a quasi-isomorphism of dg associative algebras after applying the cobar functor. Then, we show that there are no dg coalgebra models of chains in this theory that are strictly cocommutative and coassociative, giving a negative answer to a formulation of the cocommutative chains problem. 
\end{abstract} 

\section{Introduction}

One of the questions that gave birth to rational homotopy theory is the \textit{commutative cochains problem} which, given a commutative ring $\mathbf{k}$, asks whether there exists a commutative differential graded (dg) associative $\mathbf{k}$-algebra functorially associated to any topological space that is weakly equivalent to the dg associative algebra of singular $\mathbf{k}$-cochains on the space with the cup product \cite{S77}, \cite{Q69}. In this paper we study a coalgebra version of this problem, which requires a careful consideration of what it means for two coalgebras to be weakly equivalent and for two possibly non-simply connected spaces to be rationally homotopy equivalent, as we now explain. 

For technical reasons we use a pointed version of the normalized singular chains and cochains (Definition \ref{def:Basedchains}). The \textit{pointed normalized singular chains} on a pointed space $(X,b)$, denoted by $C_*(X,b,\mathbf{k})$, form a connected coaugmented dg coassociative $\mathbf{k}$-coalgebra with the Alexander-Whitney diagonal approximation as coproduct. The linear dual of $C_*(X,b,\mathbf{k})$, denoted by $C^*(X,b,\mathbf{k})$, is the connected augmented dg associative $\mathbf{k}$-algebra of pointed normalized singular cochains with the cup product.
The Alexander-Whitney coproduct on $C_*(X,b;\mathbf{k})$ is not strictly cocommutative, but its cocommutativity holds up to an infinite coherent family of homotopies. This algebraic structure may be described using the language of operads; namely, the dg coassociative coalgebra structure on  $C_*(X,b;\mathbf{k})$ extends to an $E_{\infty}$-\textit{coalgebra} structure. This dualizes to an $E_{\infty}$-\textit{algebra} structure on $C^*(X,b;\mathbf{k})$ extending the cup product.

Denote by $\text{Top}_*$ the category of pointed path-connected topological spaces and by $\text{CDGA}_{\mathbf{k}}$ the category of augmented commutative dg associative $\mathbf{k}$-algebras. A pointed version of the commutative cochains problem is given by the following question.

%For any $(X,b) \in \text{Top}_*$ let $C_n(X,b; \mathbf{k})$ be the $\mathbf{k}$-module obtained as the quotient of the free $\mathbf{k}$-module generated singular simplices $\sigma: \Delta^n \to X$, that send all the vertices of $\Delta^n$ to $b \in X$, by the submodule generated by degenerate singular simplices. The graded $\mathbf{k}$-module $C_*(X,b;\mathbf{k}) = \oplus_{n=0}^{\infty} C_n(X,b; \mathbf{k})$ is a cougmented (in fact, connected) dg coassociative coalgebra when equipped with the usual differential and Alexander-Whitney coproduct $\Delta: C_*(X,b; \mathbf{k}) \to C_*(X,b;\mathbf{k}) \otimes C_*(X,b;\mathbf{k})$. Denote by $C^*(X,b; \mathbf{k}):=Hom_{\mathbf{k}}(C_*(X,b; \mathbf{k}),\mathbf{k})$ the dual augmented (in fact, connected) dg associative algebra. We call $C_*(X,b; \mathbf{k})$ (resp. $C^*(X,b; \mathbf{k}))$ the dg coalgebra (resp. algebra) of singular chains (resp. cochains) with vertices at $b$. The cohomology of $C^*(X,b;\mathbf{k})$ is a graded connected commutative algebra. A pointed version of the commutative cochains problem is the following:

\begin{question} Is there a functor $\mathcal{A}: \text{Top}_* \to \text{CDGA}_{\mathbf{k}}$  such that for any $(X,b) \in \text{Top}_*$, $\mathcal{A}(X,b)$ can be connected by a zig-zag of quasi-isomorphisms of augmented dg associative $\mathbf{k}$-algebras to $C^*(X,b; \mathbf{k})$?
\end{question}
Steenrod operations are obstructions for the existence of a functor $\mathcal{A}$ when $\mathbf{k}=\mathbb{Z}$ or a field of non-zero characteristic. Sullivan and Quillen showed via different approaches that when $\mathbf{k}=\mathbb{Q}$ such a functor $\mathcal{A}$ exists and, furthermore, the quasi-isomorphism type of the rational commutative dg algebra $\mathcal{A}(X,b)$ determines the rational homotopy type of $X$ when $X$ is a simply-connected space of finite type.

In \cite{BK71} and \cite{BK72}, Bousfield and Kan describe two possible rational completions for general (not necessarily nilpotent) spaces both leading to different extensions of the classical rational homotopy theory of Sullivan and Quillen. The first one, known as the $\Q$-\textit{completion} of a space, naturally associates to any space $X$ another space $\mathbb{Q}_{\infty}X$ together with a map $\rho: X \to \mathbb{Q}_{\infty}X$. When $X$ is a nilpotent space $\Q_{\infty}X$ has the Malcev completion of $\pi_1(X,b)$ as fundamental group and the rationalized higher homotopy groups of $X$ as higher homotopy groups.

We call a continuous map $f: X \to Y$ a \textit{$\Q_{\infty}$-homotopy equivalence} if $\Q_{\infty}f: \Q_{\infty}X \to \Q_{\infty}Y$ is a weak homotopy equivalence. This is the notion of weak equivalence in the extension of rational homotopy theory of \cite{BFMT18}. The $\Q$-completion construction satisfies the following properties. 
\begin{proposition} [\cite{BK71}] Let $f: (X,b) \to (Y,c)$ be a  continuous map of pointed path-connected spaces. Then, the following are equivalent:
\begin{enumerate}
\item The map $f: (X,b) \to (Y,c)$ is a $\Q_{\infty}$-homotopy equivalence, i.e. the induced map between $\Q$-completions $\Q_{\infty}f: \Q_{\infty}X \to \Q_{\infty}Y$ is a weak homotopy equivalence.
\item The induced map $C_*(f; \mathbb{Q}): C_*(X,b;\mathbb{Q}) \to C_*(Y,c; \mathbb{Q})$ is a quasi-isomorphism. 
\vspace{-8pt}
\item[] \hspace{-42pt} Furthermore, if $X$ and $Y$ are both nilpotent spaces then (1) and (2) are equivalent to:
\\
\item  The map $f: (X,b) \to (Y,c)$ induces an isomorphism between Malcev completions of the fundamental groups and an isomorphism of higher rationalized homotopy groups. 
\end{enumerate}

\end{proposition}
The second completion functor, known as the \emph{fiberwise $\Q$-completion}, associates to any space $X$ another space $\Q_\infty^*X$, having the same fundamental group as $X$ and higher homotopy groups isomorphic to the rationalized higher homotopy groups of $X$.  A continuous map $f: X \to Y$ is a \textit{$\pi_1$-rational homotopy equivalence} if $\Q_{\infty}^*f: \Q_{\infty}^*X \to \Q_{\infty}^*Y$ is a weak homotopy equivalence. This is the notion of weak equivalence in the extension of rational homotopy theory of \cite{GHT00}. 

In section 3, we prove that $\pi_1$-rational homotopy equivalences are detected by the dg coassociative coalgebra of pointed normalized singular chains with rational coefficients. Then, in section 4, we study a coalgebra version of Question 1 which fits with $\pi_1$-rational homotopy equivalences. More precisely, our first result is the following theorem.

\begin{theorem} \label{rationalchains} 
Let $f: (X,b) \to (Y,c)$ be a  continuous map of pointed path-connected spaces. Then, the following are equivalent:
\begin{enumerate}
 \item The map $f: (X,b) \to (Y,c)$ is a $\pi_1$-rational homotopy equivalence, i.e the induced map between fiberwise $\Q$-completions $\Q_{\infty}^*f: \Q_{\infty}^*X \to \Q_{\infty}^*Y$ is a weak homotopy equivalence.
 \item The induced maps $\pi_1(f): \pi_1(X,b) \to \pi_1(Y,c)$ and $\pi_n(f) \otimes \Q: \pi_n(X,b)\otimes \Q \to \pi_n(Y,c)\otimes \Q$ for $n \geq 2$ are isomor\-phisms.
 \item The induced map $C_*(f; \mathbb{Q}): C_*(X,b;\mathbb{Q}) \to C_*(Y,c; \mathbb{Q})$ is an $\Omega$-quasi-isomor\-phism, where an $\Omega$-quasi-isomorphism is a map that induces an isomorphism after applying the cobar construction (see section \ref{seccobar}).
 \item The induced map $\pi_1(f) :\pi_1(X,b) \rightarrow \pi_1(Y,c)$ is an isomorphism and for every $\Q$-representation $A$ of $\pi_1(Y,c)$ the induced map on the homology with local coefficients $H_*(f):H_*(X;f^*A)\rightarrow H_*(Y;A)$ is an isomorphism. 
\end{enumerate}
\end{theorem} 

The above theorem says that the notion of $\pi_1$-rational homotopy equivalence is in fact a rational notion, i.e. it can be described in terms of maps that preserve algebraic structures on rational vector spaces. The proof of the equivalence between (2) and (3) of Theorem \ref{rationalchains} uses a recent extension of a classical theorem of Adams, proven in \cite{RZ16} by the first and third author, which says that for any pointed \textit{path-connected} space $(X,b)$ there is a natural quasi-isomorphism of dg algebras $$\theta: \Omega C_*(X,b; \mathbb{Q}) \simeq C^{\square}_*(\Omega_bX; \mathbb{Q}),$$ where $C^{\square}_*(\Omega_bX; \mathbb{Q})$ is the dg algebra of rational cubical singular chains on the (Moore) based loop space of $X$ at $b$. The proof of Theorem \ref{rationalchains} also uses the fact that $\theta$ is a quasi-isomorphism of dg bialgebras for natural bialgebra structures on $\Omega C_*(X,b; \mathbb{Q})$ and $C^{\square}_*(\Omega_bX; \mathbb{Q})$. 

Quillen proved that associated to any simply connected space $X$ there is a rational cocommutative dg coassociative coalgebra which is quasi-isomorphic to the rational chains on $X$. Under the light of Theorem \ref{rationalchains} we may now ask a stronger question, namely, if for any path-connected $(X,b)$ the dg coalgebra $C_*(X,b;\mathbb{Q})$ may be strictified into a rational cocommutative dg coassociative coalgebra which still detects the fundamental group (or at least the fundamental group algebra) and the higher rational homotopy groups. More precisely, denoting by $\text{CDGC}_{\mathbf{k}}$ the category of coaugmented conilpotent dg cocommutative coassociative $\mathbf{k}$-coalgebras we ask the following question, which we call the \textit{cocommutative chains problem}:

\begin{question} \label{chainsproblem} Is there a functor $\mathcal{C}: \text{Top}_* \to \text{CDGC}_{\mathbf{k}}$ such that for any $(X,b) \in \text{Top}_*$, $\mathcal{C}(X,b)$ can be connected by a zig-zag of $\Omega$-quasi-isomorphisms of coaugmented conilpotent dg coassociative coalgebras to $C_*(X,b; \mathbf{k})$? 
\end{question}

Steenrod operations are also an obstruction for the existence of $\mathcal{C}$ when $\mathbf{k}= \mathbb{Z}$ or a field of non-zero characteristic. In section 4, we prove that there is no such functor $\mathcal{C}$ even when $\mathbf{k}$ is a field of characteristic zero. 

\subsection{Acknowledgments}

The first author acknowledges the support of the grant Fordecyt 265667 and the excellent working conditions of \textit{Centro de colaboraci\'on Samuel Gitler} in Mexico City. The second and third authors would like to thank the Max Planck Institute for Mathematics, where they first met and their collaboration started, for the hospitality and support during their stays.  

\section{Algebraic Preliminaries}

In this section we recall the algebraic constructions and results that will be used in the proofs of our main theorems in section 3 and 4. Let $\mathbf{k}$ be a commutative ring. We assume familiarity with the notions of differential graded (dg) $\mathbf{k}$-algebras and $\mathbf{k}$-coalgebras. The phrases ``dg algebra" and ``dg coalgebra" will mean ``unital augmented differential graded associative $\mathbf{k}$-algebra" and``counital coaugmented conilpotent differential graded coassociative $\mathbf{k}$-coalgebra", respectively. A graded $\mathbf{k}$-(co)algebra $V$ is \textit{connected} if $V_n=0$ for all $n <0$ and $V_0 \cong \mathbf{k}$. We also assume familiarity with bialgebras and Hopf algebras; in this paper we will mean by ``bialgebra" a unital augmented associative counital coaugmented coassociative bialgebra. A Hopf algebra is a bialgebra having the property of admitting an antipode. Antipodes are unique when they exist. We refer to \cite{LV12} for further background. 

\subsection{The cobar construction}\label{seccobar}

Let $\text{DGA}_{\mathbf{k}}$ and $\text{DGC}_{\mathbf{k}}$ denote the categories of dg algebras and dg coalgebras, respectively. Recall the definition of the \textit{cobar} functor
$$\Omega: \text{DGC}_{\mathbf{k}}  \to \text{DGA}_{\mathbf{k}}.$$
Given a dg coalgebra $C$ define a dg algebra
$$\Omega C := ( T(s^{-1}  \bar{C} ), D)$$
where $\bar C$ is the cokernel of the coaugmentation $\mathbf{k} \to C$, $s^{-1}$ is the shift functor which lowers degree by $1$, $T(s^{-1} \bar{C})= \mathbf{k} \oplus \bigoplus_{i=1}^{\infty} (s^{-1}\bar{C})^{\otimes i}$ the unital tensor algebra, and the differential $D$ is defined as follows. Let $\partial: C \to C$ and $\Delta: C \to C \otimes C$ be the differential and coproduct of $C$. Now extend the linear map $$- s^{-1} \circ \partial  \circ s^{+1} + (s^{-1} \otimes s^{-1}) \circ \Delta  \circ s^{+1}: s^{-1}\bar{C} \to T(s^{-1} \bar C)$$ as a derivation to obtain $D: T(s^{-1} \bar{C}) \to T(s^{-1} \bar{C})$. The coassociativity of $\Delta$, the compatibility of $\partial$ and $\Delta$, and the fact that $\partial^2 =0$ together imply that $D^2=0$. 

A \textit{quasi-isomorphism} of dg $\mathbf{k}$-modules is a chain map which induces an isomorphism in homology. A map of dg (co)algebras is said to be a quasi-isomorphism if the underlying map of dg $\mathbf{k}$-modules is. The following stronger notion of weak equivalence between dg coalgebras will play a fundamental role in this article. 
\begin{definition} A map of dg coalgebras $f: C\to C'$ is called an \textit{$\Omega$-quasi-isomorphism} if $\Omega f: \Omega C \to \Omega C'$ is a quasi-isomorphism of dg algebras. 
\end{definition} 

\begin{remark} Any $\Omega$-quasi-isomorphism is a quasi-isomorphism but not vice-versa, an example of this fact can be found in Proposition 2.4.3 of \cite{LV12}. Another example is given by considering the simplicial set $S$ which has exactly one vertex, one non-degenerate $1$-simplex, and all the higher simplices are degenerate. Since $|S|$ is homotopy equivalent to the circle $S^1$, the dg coalgebras of simplicial chains $C^{\Delta}_*(S;\mathbf{k})$ and singular chains $C_*(S^1;\mathbf{k})$ are quasi-isomorphic. However, $H_0( \Omega C^{\Delta}_*(S) )$ is isomorphic to the polynomial algebra $\mathbf{k}[x],$ while $H_0( \Omega C_*(S^1;\mathbf{k}) )$ is isomorphic to $\mathbf{k}[x,x^{-1}]$. 

A quasi-isomorphism between \text{simply connected dg coalgebras} (namely, non-negatively graded dg coalgebras $C$ such that $C_0 \cong \mathbf{k}$ and $C_1=0$) is an $\Omega$-quasi-isomorphism, as discussed in section 2.4 of \cite{LV12}. 
 \end{remark}

\subsection{Lie algebras and related constructions} 
For the rest of the section we work over a field $\mathbf{k}$ of characteristic zero. We review some classical constructions and results which will be used in Sections 3 and 4. We refer to \cite{Q69} or \cite{FHT01} for further details. 

Given a dg vector space $V$ denote by $SV$ the the \textit{symmetric algebra} generated by $V$. The dg commutative algebra $SV$ is defined as the quotient of the tensor algebra $TV$ by the ideal generated by elements of the form $x\otimes y - (-1)^{|x||y|} y\otimes x$ for $x,y \in V$. The unital dg associative algebra structure on $TV$ induces a unital commutative dg associative algebra structure on $SV$. Moreover, $SV$ is a commutative cocommutative dg bialgebra when equipped with coproduct $\Delta: SV \to SV \otimes SV$ given by extending $\Delta(x)=x \otimes 1 + 1 \otimes x$ as an algebra map and counit $S(V) \to \mathbf{k}$ induced by the projection $TV \to T^0V=\mathbf{k}$

Given a dg Lie algebra $L$ denote by $UL$ the \textit{universal enveloping algebra} of $L$. The dg associative algebra $UL$ is defined as the quotient of $TL$ by the ideal generated by elements of the form $x \otimes y - (-1)^{|x||y|}y \otimes x - [x,y]$ for $x,y \in L$. Moreover, $UL$ is a cocommutative dg bialgebra when equipped with the coproduct also determined by the formula $\Delta(x)=x \otimes 1 + 1\otimes x$ and counit induced by the projection $TV \to T^0V=\mathbf{k}$. 

The universal enveloping algebra construction defines a functor from dg Lie algebras to cocommutative dg bialgebras which commutes with homology.

\begin{theorem}[\cite{Q69} Appendix B, Proposition 2.1]\label{thrm:Universalenvelopecommuteswithhomology}
For any dg Lie algebra $L$ there is a natural isomorphism $$UH_*(L) \cong H_*(UL)$$ of graded cocommutative bialgebras. 
\end{theorem}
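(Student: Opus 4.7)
The plan is to combine the universal property of the universal enveloping algebra with the Poincar\'e--Birkhoff--Witt (PBW) and Milnor--Moore theorems, both valid over a field of characteristic zero, to produce the desired natural bialgebra isomorphism.

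First I would build the comparison map by functoriality. Viewing $UL$ as a dg Lie algebra via its commutator bracket, the canonical inclusion $L\hookrightarrow UL$ is a map of dg Lie algebras whose image lies in the primitive subspace of the bialgebra $UL$. Passing to homology gives a graded Lie algebra map $H_*(L)\to H_*(UL)$ whose image is still primitive, and the universal property of $U$ extends it uniquely to a graded algebra map $\phi:UH_*(L)\to H_*(UL)$. Because the generators $H_*(L)$ of $UH_*(L)$ map to primitives and on both sides the coproduct is determined by its values on primitives, $\phi$ is in fact a morphism of graded cocommutative bialgebras, and it is manifestly natural in $L$.

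To prove that $\phi$ is an isomorphism I would appeal to PBW in its coalgebra form: the symmetrization
$$\sigma_L(x_1\cdots x_n)=\frac{1}{n!}\sum_{\tau\in S_n}\epsilon(\tau)\,x_{\tau(1)}\otimes\cdots\otimes x_{\tau(n)},$$
with Koszul sign $\epsilon(\tau)$, defines a natural isomorphism of dg coalgebras $\sigma_L:SL\to UL$ (although not an algebra map). Over a field of characteristic zero the symmetric algebra functor commutes with homology: invariants and coinvariants for the $S_n$-action on $V^{\otimes n}$ coincide, so K\"unneth gives a natural coalgebra iso $SH_*(L)\to H_*(SL)$. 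Chaining these yields a natural graded coalgebra isomorphism $UH_*(L)\cong SH_*(L)\cong H_*(SL)\cong H_*(UL)$, obtained by combining $\sigma_{H_*(L)}$ on the left with $H_*(\sigma_L)$ on the right. Under this identification the primitive subspace of $H_*(UL)$ is sent exactly to $H_*(L)\subset UH_*(L)$, matching the primitives on both sides.

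Finally I would invoke the Milnor--Moore theorem in characteristic zero: any connected cocommutative graded bialgebra $H$ is canonically isomorphic as a bialgebra to $UP(H)$, via the unique algebra extension of the inclusion of its primitives. Applied to $H=H_*(UL)$ with $P(H_*(UL))=H_*(L)$ by the previous paragraph, this canonical extension is exactly $\phi$, so $\phi$ is a bialgebra isomorphism as claimed. The main obstacle is reconciling the two descriptions of $\phi$: as an algebra map coming from universality, and as a coalgebra map coming from symmetrization. Milnor--Moore is precisely what bridges them and upgrades the coalgebra identification to a bialgebra isomorphism, and it is in this last step that characteristic zero is essential, since both PBW in coalgebra form and Milnor--Moore fail over a field of positive characteristic.
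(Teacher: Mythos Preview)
The paper does not supply its own proof of this statement; it is quoted verbatim from Quillen's Appendix~B and used as a black box. So there is no in-paper argument to compare against, and your proposal should be judged on its own merits and against Quillen's original.

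Your argument is essentially correct and follows the standard line: construct the natural bialgebra map $\phi:UH_*(L)\to H_*(UL)$ by universality, then use PBW in its coalgebra form together with K\"unneth to see that both sides are abstractly isomorphic as coalgebras with matching primitives, and finally invoke Milnor--Moore to conclude that $\phi$ itself is the isomorphism. One caution: the Milnor--Moore theorem you invoke requires the Hopf algebra to be connected, and for an arbitrary dg Lie algebra $L$ (in particular for the Lie algebras $\mathcal{L}C$ arising later in the paper, which have nontrivial degree~$0$ part) the bialgebra $H_*(UL)$ need not be connected. Quillen's original argument, and the cleaner route here, bypasses Milnor--Moore: one checks that the PBW symmetrization is natural in $L$, so that the square
\[
\begin{tikzcd}
SH_*(L) \arrow[r,"\text{K\"unneth}"] \arrow[d,"\sigma_{H_*(L)}"'] & H_*(SL) \arrow[d,"H_*(\sigma_L)"] \\
UH_*(L) \arrow[r,"\phi"] & H_*(UL)
\end{tikzcd}
\]
commutes (equivalently, one works with the PBW filtration and observes that $\mathrm{gr}\,\phi$ is the K\"unneth isomorphism). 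Since three of the four arrows are isomorphisms, so is $\phi$, and no connectedness hypothesis is needed. Your PBW/K\"unneth step already contains all of this; the detour through Milnor--Moore is unnecessary and introduces a hypothesis you do not have in general.
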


The Poincar\'e-Birkhoff-Witt (PBW) Theorem relates $S$ and $U$.

\begin{theorem}[Poincar\'e-Birkhoff-Witt, \cite{Q69} Appendix B, Theorem 2.3] \label{thrm:PBW} For any dg Lie algebra $L$ there is an isomorphism of cocommutative dg coalgebras $$e: SL \xrightarrow{\cong} UL.$$
\end{theorem}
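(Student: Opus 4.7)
The plan is to construct the map $e$ explicitly as a (graded) symmetrization map and then verify the three properties separately: that it is a chain map, a coalgebra map, and a linear isomorphism. Concretely, for $x_1,\dots,x_n \in L$ I would define
\[
 e(x_1 \cdots x_n) \;=\; \frac{1}{n!}\sum_{\sigma \in S_n} \varepsilon(\sigma;x_1,\dots,x_n)\, x_{\sigma(1)} \otimes \cdots \otimes x_{\sigma(n)} \in UL,
\]
where $\varepsilon(\sigma;x_1,\dots,x_n)$ is the Koszul sign produced by the permutation. This uses characteristic zero in an essential way through the factor $1/n!$. Since $x \cdot y - (-1)^{|x||y|} y\cdot x = [x,y]$ in $UL$, a short induction on $n$ shows that $e$ descends from $TL$ to the symmetric algebra $SL$, so it is well-defined.

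Next I would check that $e$ is a chain map. Both differentials, on $SL$ and on $UL$, are obtained by extending the differential $\partial_L$ of $L$ as a (graded) derivation for the respective products. Since the symmetrization in the formula for $e$ is invariant under permutations (with Koszul signs), applying the derivation either before or after symmetrizing produces the same element; this is a direct sign check on a monomial. For the coalgebra property, I would verify the identity
\[
 \Delta_{UL} \circ e \;=\; (e \otimes e) \circ \Delta_{SL}
\]
on a general monomial $x_1\cdots x_n$. Both sides unfold into sums indexed by shuffles $(I,J)$ of $\{1,\dots,n\}$ (because on $SL$ the coproduct of a product is already a signed sum over shuffles, and the coproduct on $UL$ evaluated on the symmetrized element reorganizes as a shuffle thanks to the primitivity of elements of $L$). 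Matching the two expansions term-by-term, with Koszul signs, gives the identity.

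Finally, I would show $e$ is a bijection by the classical PBW filtration argument. Filter $UL$ by the submodules $F_n UL$ spanned by products of at most $n$ elements of $L$; the associated graded $\mathrm{gr}\, UL$ is commutative because $[x,y] \in F_1$ while $x\cdot y, y \cdot x \in F_2$, so $\mathrm{gr}\, UL$ is canonically a quotient of $SL$. Filtering $SL$ by symmetric degree, the map $e$ is filtration-preserving and its associated graded is the identity on monomials; hence $e$ induces an isomorphism on $\mathrm{gr}$, and one concludes that $e$ itself is an isomorphism (of graded $\mathbf{k}$-modules, hence of dg coalgebras by the preceding steps).

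The main obstacle I expect is the verification that $e$ is a coalgebra map: it requires carefully matching the signed shuffle expansion of $\Delta_{SL}(x_1\cdots x_n)$ with the result of first symmetrizing in $UL$ and then applying the (algebra-extended) coproduct. Everything else — chain map, well-definedness, and bijectivity via PBW filtration — is essentially formal once the signs are pinned down, but the coalgebra compatibility is where the combinatorics of shuffles and Koszul signs genuinely has to be managed.
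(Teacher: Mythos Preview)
The paper does not give its own proof of this statement; it is cited directly from Quillen \cite{Q69}, Appendix B, Theorem 2.3, and used as a black box. Your proposal is correct and is essentially the classical argument (and the one Quillen gives): define $e$ as the graded symmetrization map, check it is a dg coalgebra map using that both coproducts make elements of $L$ primitive, and deduce bijectivity from the induced isomorphism on the associated graded of the length filtration.
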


Since the symmetric algebra functor preserves quasi-isomorphisms we immediately obtain the following corollary.

\begin{corollary}\label{cor:universalenvelopingalgebrapreservesqi}
 If $f:L\rightarrow L'$ is a quasi-isomorphism of dg Lie algebras, then $U f:U L \rightarrow U L'$ is a quasi-isomorphism as well. 
\end{corollary}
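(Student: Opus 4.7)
The plan is to reduce the statement to the fact that the symmetric algebra functor preserves quasi-isomorphisms in characteristic zero, using the Poincar\'e--Birkhoff--Witt isomorphism as a bridge between $U$ and $S$.

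First, I would recall that the PBW map $e_L \colon SL \to UL$ is defined explicitly by symmetrization, $e_L(x_1 \cdots x_n) = \frac{1}{n!}\sum_{\sigma \in \Sigma_n} \pm\, x_{\sigma(1)} \cdots x_{\sigma(n)}$, where $\pm$ is the Koszul sign. This formula makes $e$ a natural transformation from $S$ (viewed as a functor on dg Lie algebras by restriction along the forgetful functor to dg vector spaces) to $U$. In particular, for a dg Lie algebra map $f \colon L \to L'$ the square
\begin{equation*}
\begin{tikzcd}
SL \ar[r, "Sf"] \ar[d, "e_L"'] & SL' \ar[d, "e_{L'}"] \\
UL \ar[r, "Uf"'] & UL'
\end{tikzcd}
\end{equation*}
commutes, and by Theorem~\ref{thrm:PBW} the two vertical arrows are isomorphisms of dg coalgebras, hence in particular quasi-isomorphisms of the underlying dg $\mathbf{k}$-modules. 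Thus it suffices to show that $Sf$ is a quasi-isomorphism.

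Next, I would verify that $S$ preserves quasi-isomorphisms over a field of characteristic zero. Decompose $SV = \bigoplus_{n \geq 0} S^n V$, so it is enough to check this degree by degree. Each $S^n V$ is a direct summand of $V^{\otimes n}$, realized as the image of the symmetrization idempotent $p_n = \frac{1}{n!}\sum_{\sigma \in \Sigma_n} \pm \sigma$ acting on $V^{\otimes n}$ (this is where characteristic zero is used). Since $p_n$ is $\Sigma_n$-equivariant and commutes with the differential, applying the K\"unneth isomorphism $H_*(V^{\otimes n}) \cong H_*(V)^{\otimes n}$ and taking the image of the induced idempotent gives $H_*(S^n V) \cong S^n H_*(V)$, naturally in $V$. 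Hence if $f \colon V \to V'$ is a quasi-isomorphism of dg vector spaces, so is $S^n f$ for every $n$, and therefore so is $Sf$.

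Combining the two steps, if $f \colon L \to L'$ is a quasi-isomorphism of dg Lie algebras then $Sf$ is a quasi-isomorphism, and the commutative square together with the PBW isomorphisms forces $Uf$ to be a quasi-isomorphism as well. The only potential subtlety is the naturality of $e$; this is a routine check from the explicit symmetrization formula, so I expect no serious obstacle. The heart of the argument is really the K\"unneth-plus-idempotent computation in the second paragraph, which is where the characteristic-zero hypothesis enters in an essential way.
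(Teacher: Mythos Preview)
Your proposal is correct and follows exactly the approach the paper uses: the paper derives the corollary from the PBW isomorphism (Theorem~\ref{thrm:PBW}) together with the observation that the symmetric algebra functor preserves quasi-isomorphisms. You have simply spelled out the details the paper leaves implicit, namely the naturality of $e$ and the K\"unneth-plus-idempotent argument for why $S$ preserves quasi-isomorphisms in characteristic zero.
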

In \cite{Q69}, Quillen introduced a functor $$\mathcal{L}: \text{CDGC}_{\mathbf{k}} \to \text{DGL}_{\mathbf{k}},$$ where $\text{DGL}_{\mathbf{k}}$ denotes the category of dg Lie algebras, defined as follows. Given $C \in \text{CDGC}_{\mathbf{k}}$ with differential $\partial: C \to C$, the dg Lie algebra $\mathcal{L}C$ has as underlying graded Lie algebra the free graded Lie algebra $(L(s^{-1} \bar{C}), [ \_ , \_])$ generated by $s^{-1} \bar{C}$, the desuspension of the coaugmentation ideal of $C$. The differential $d_{\mathcal{L}C}$ is determined by
$$ d_{\mathcal{L}C} (\tau x) = - \tau \partial x - \frac{1}{2} \sum_{(x)} (-1)^{|x'|}[\tau x', \tau x''],$$
where $\tau: C \to L(s^{-1} \bar{C})$ is the canonical map given by $\tau x= s^{-1} \bar x$ and $\Delta(x)= \sum_{(x)} x' \otimes x''$ denotes the coproduct of $C$. 

The functors $\Omega$ and $\mathcal{L}$ are related via $U$ as follows.

\begin{lemma}[\cite{Q69} p.290]\label{lem:cobaranduniversalenvelopingalgebra}
 For any cocommutative dg coalgebra $C$ there is a natural isomorphism of augmented dg associative algebras $$\Omega C \cong U \mathcal{L} C.$$
 \end{lemma}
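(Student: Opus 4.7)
The plan is to first identify the underlying graded augmented algebras of $\Omega C$ and $U\mathcal{L}C$ via the standard identification $UL(V)\cong T(V)$, and then check that the two differentials match on generators, with cocommutativity of $C$ entering at precisely the key step.

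First, I would recall that for any graded vector space $V$, the universal enveloping algebra of the free graded Lie algebra $L(V)$ is the tensor algebra $T(V)$. This is immediate from universal properties: both $UL(V)$ and $T(V)$ corepresent the functor sending a graded associative algebra $A$ to $\mathrm{Hom}_{\mathrm{gVect}}(V, A)$, since a linear map $V \to A$ extends uniquely to a Lie algebra map $L(V) \to A$ (with $A$ viewed as a Lie algebra via the graded commutator) and equivalently to an associative algebra map $T(V) \to A$. Taking $V = s^{-1}\bar{C}$ yields a canonical isomorphism
$$\phi: \Omega C = T(s^{-1}\bar{C}) \xrightarrow{\cong} UL(s^{-1}\bar{C}) = U\mathcal{L}C$$
of graded augmented associative algebras, under which the Lie bracket $[a,b]$ in $\mathcal{L}C \subset U\mathcal{L}C$ is realized as the graded commutator $a\otimes b - (-1)^{|a||b|} b\otimes a$ in $T(s^{-1}\bar{C})$.

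Next, I would verify that $\phi$ is a chain map. Both $D$ on $\Omega C$ and the differential on $U\mathcal{L}C$ induced from $d_{\mathcal{L}C}$ are derivations of the tensor algebra, so it suffices to compare their restrictions to the generators $\tau x = s^{-1}\bar{x}$. The ``internal'' part $-s^{-1}\partial x = -\tau\partial x$ agrees on both sides manifestly. For the remaining part, I must show that the cobar term $(s^{-1}\otimes s^{-1})\Delta(x)$ coincides (inside $T(s^{-1}\bar{C})$) with the bracket term $-\tfrac{1}{2}\sum_{(x)} (-1)^{|x'|}[\tau x',\tau x'']$. Expanding the commutator produces two sums; applying cocommutativity of $\Delta$, which reads $\sum x'\otimes x'' = \sum (-1)^{|x'||x''|}x''\otimes x'$, to the second sum and relabeling collapses it onto the first, cancelling the factor $\tfrac{1}{2}$ and producing exactly $\sum (-1)^{|x'|}\tau x'\otimes \tau x''$, which matches the cobar coproduct term up to shift signs. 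Naturality of $\phi$ in $C$ is automatic, since both $\Omega f$ and $U\mathcal{L}f$ are determined by their restriction to generators $s^{-1}\bar{C}$, where they agree with $s^{-1}\bar{f}$.

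The main obstacle is the sign bookkeeping in the last step. Matching the signs introduced by the shift operators $s^{-1}\otimes s^{-1}$ in the cobar with the $(-1)^{|x'|}$ in the definition of $d_{\mathcal{L}C}$ and with the Koszul signs appearing in the graded commutator requires a consistent sign convention throughout. This is precisely where cocommutativity of $C$ is essential: without it, the two half-weighted terms obtained from expanding the symmetrized bracket would not coalesce into a single tensor term, and the identification of differentials would fail.
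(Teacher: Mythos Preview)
The paper does not give its own proof of this lemma; it merely cites \cite{Q69}, p.~290. Your argument is exactly the standard one (and essentially Quillen's): use the universal property to identify $UL(s^{-1}\bar C)\cong T(s^{-1}\bar C)$ as graded augmented algebras, then check on generators that the derivation extending $d_{\mathcal L C}$ agrees with the cobar differential, the cocommutativity of $\Delta$ being what collapses the two halves of the graded commutator into a single tensor term. There is nothing to compare; your outline is correct.

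Your own caveat about sign bookkeeping is apt. With the precise sign conventions recorded in the paper for $\Omega$ and $\mathcal L$, the two coproduct-type terms actually differ by a global sign; this is an artifact of the conventions chosen for $s^{\pm1}$ and for the $-\tfrac12$ in $d_{\mathcal L C}$ (such discrepancies between sources are ubiquitous here), not a structural gap in your argument.
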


We finish this section recalling the Milnor-Moore Theorem in topology.

\begin{theorem}[Milnor-Moore, \cite{FHT01} Theorem 21.5] \label{thrm:milnor-moore}
Let $(X,b)$ be a simply connected pointed path-connected space. There is an isomorphism of graded Hopf algebras
$$U(\pi_*(\Omega_bX) \otimes \mathbf{k}) \cong H_*(\Omega_bX;\mathbf{k}),$$
where the graded Lie algebra structure on $\pi_*(\Omega_bX) \otimes \mathbf{k}$ is given by the Whitehead bracket. 
\end{theorem}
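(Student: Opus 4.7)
The plan is to follow the classical route: identify $H_*(\Omega_b X;\mathbf{k})$ as a connected cocommutative graded Hopf algebra, then apply the Milnor--Moore structure theorem for such Hopf algebras over a characteristic zero field, and finally identify the primitive elements with $\pi_*(\Omega_b X)\otimes \mathbf{k}$ carrying the correct Lie bracket.

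First I would set up the Hopf algebra structure. Since $X$ is simply connected, $\Omega_b X$ is a path-connected topological monoid (after passing to Moore loops), so the Pontryagin product $\mu\colon \Omega_b X\times \Omega_b X\to \Omega_b X$ endows $C_*(\Omega_b X;\mathbf{k})$ with a dg algebra structure, and the Alexander--Whitney approximation to the diagonal makes it a dg bialgebra; the homotopy-cocommutativity of the diagonal yields, on passage to homology, a cocommutative graded bialgebra $H_*(\Omega_b X;\mathbf{k})$. Connectedness of $\Omega_b X$ (guaranteed by $\pi_1(X)=0$) ensures $H_0(\Omega_b X;\mathbf{k})\cong \mathbf{k}$, so this bialgebra is connected and automatically admits an antipode, hence is a Hopf algebra.

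Next I would invoke the algebraic Milnor--Moore structure theorem: over a field of characteristic zero, every connected cocommutative graded Hopf algebra $H$ is isomorphic, as a Hopf algebra, to $U(\mathrm{Prim}(H))$, where $\mathrm{Prim}(H)$ is the graded Lie algebra of primitive elements under the commutator bracket. Applied to $H=H_*(\Omega_b X;\mathbf{k})$, this gives an isomorphism of Hopf algebras
\[
U\bigl(\mathrm{Prim}\,H_*(\Omega_b X;\mathbf{k})\bigr)\;\cong\;H_*(\Omega_b X;\mathbf{k}).
\]
It then remains to identify $\mathrm{Prim}\,H_*(\Omega_b X;\mathbf{k})$ with $\pi_*(\Omega_b X)\otimes \mathbf{k}$ as graded Lie algebras. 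The Hurewicz map $h\colon \pi_*(\Omega_b X)\to H_*(\Omega_b X;\mathbf{k})$ lands in the primitives because the diagonal on any spherical class is primitive, so it factors through $\pi_*(\Omega_b X)\otimes \mathbf{k}\to \mathrm{Prim}\,H_*(\Omega_b X;\mathbf{k})$. To show this is an isomorphism, I would argue by induction on Postnikov sections of $\Omega_b X$ (equivalently, a cellular/minimal-model induction on $X$): for an Eilenberg--MacLane space $K(\pi,n)$ with $\pi$ a $\mathbf{k}$-vector space and $n\ge 1$, a direct computation shows $H_*(\Omega K(\pi,n);\mathbf{k})$ is a free graded-commutative Hopf algebra on $\pi$ placed in degree $n-1$, whose primitives are exactly $\pi$; the general case follows using the Serre spectral sequence of each stage of the Postnikov tower, together with the fact that extensions of primitively generated Hopf algebras (over characteristic zero) remain primitively generated.

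The main obstacle is the last step: matching the commutator bracket on $\mathrm{Prim}\,H_*(\Omega_b X;\mathbf{k})$ with the Whitehead bracket on $\pi_*(X)$ after looping. The natural bracket one gets from the Pontryagin product is the Samelson product on $\pi_*(\Omega_b X)$, so the job is to show that the suspension isomorphism $\pi_{n+1}(X)\cong \pi_n(\Omega_b X)$ carries the Whitehead product to (a sign-adjusted version of) the Samelson product, and that the Samelson product in turn coincides with the commutator of Hurewicz images in $H_*(\Omega_b X;\mathbf{k})$. Both identifications are classical: the second is immediate from the definition of the Samelson product as $[\alpha,\beta]= \mu_*(\alpha\times \beta)-(-1)^{|\alpha||\beta|}\mu_*(\beta\times \alpha)$ together with the compatibility of Hurewicz with products, while the first is checked on representatives using the standard null-homotopy that defines the Whitehead product. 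Assembling these identifications produces the desired isomorphism of graded Hopf algebras.
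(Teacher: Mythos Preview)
The paper does not prove this statement at all; it is recorded in Section~2 as a classical result and attributed to \cite{FHT01}, Theorem~21.5, with no argument given. There is therefore nothing in the paper to compare your proof against.

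For what it is worth, your outline is the standard route and is essentially the one taken in the cited reference: equip $H_*(\Omega_b X;\mathbf{k})$ with its Pontryagin--diagonal Hopf algebra structure, invoke the algebraic Milnor--Moore theorem (connected cocommutative graded Hopf algebra over characteristic zero is the universal envelope of its primitives), and then identify the primitives with $\pi_*(\Omega_b X)\otimes\mathbf{k}$ via the Hurewicz map, using the Samelson--Whitehead correspondence for the Lie structure. One small comment: your Postnikov induction sketch implicitly assumes the homotopy groups are already $\mathbf{k}$-vector spaces (you write ``$K(\pi,n)$ with $\pi$ a $\mathbf{k}$-vector space''), which amounts to first rationalizing $X$; in the general case one either does this or appeals directly to the Cartan--Serre argument that the rational Hurewicz map for a loop space is injective onto the primitives. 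This is a minor gap in the exposition rather than a real obstruction.
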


\section{coalgebras and non-simply-connected rational homotopy theory }\label{sec:nonsimplyconnectedrationalhomotopytheory}

 In this section we define two notions of weak equivalences between spaces for two different approaches to extend rational homotopy theory to non-simply connected spaces. These are based on two rational completions for spaces described in \cite{BK71}. We explain the sense in which the rational dg coalgebra of pointed normalized singular chains detects each of these weak equivalences. All topological spaces will be assumed to be semi-locally simply connected and locally path-connected so that universal covers exist. Denote by $\text{Top}_*$ the category of pointed path-connected spaces. We first define the version of singular chains on pointed spaces that will be used. 

\begin{definition}\label{def:Basedchains}
For any $(X,b) \in \text{Top}_*$ and any commutative ring $\mathbf{k}$, the coaugmented connected dg $\mathbf{k}$-coalgebra of \textit{pointed normalized singular chains} $(C_*(X,b;\mathbf{k}), \partial, \Delta)$ is defined as follows. The underlying graded $\mathbf{k}$-module is obtained by considering the graded submodule of the ordinary singular chains generated by those continuous maps $\sigma: \Delta^n \to X$ that send the vertices of the $n$-simplex $\Delta^n$ to $b\in X$, and then modding out by degenerate simplices. The usual boundary operator for singular chains induces a differential $\partial: C_*(X,b;\mathbf{k})\to C_{*-1}(X,b;\mathbf{k})$ and the Alexander-Whitney diagonal approximation map induces a compatible coassociative coproduct $$\Delta: C_*(X,b;\mathbf{k})\to C_*(X,b;\mathbf{k})\otimes C_*(X,b;\mathbf{k}).$$ Note that $C_0(X,b;\mathbf{k}) \cong \mathbf{k}$, so the counit and coaugmentation are canonically defined. This construction defines a functor $$C_*: \text{Top}_* \to \text{DGC}^0_{\mathbf{k}},$$ where $\text{DGC}^0_{\mathbf{k}}$ denotes the full subcategory of $\text{DGC}_{\mathbf{k}}$ whose objects are connected dg coalgebras.
\end{definition}

In  \cite{BK71}, Bousfield and Kan define the  $\mathbb{Q}$-\textit{completion} of a space $X$ as $$\Q_{\infty}X =|\overline{\mathbf{W}} \Q_{\infty} (\mathbf{G} \text{Sing}(X,b) ) |,$$
where $\text{Sing}(X,b)$ is the subsimiplicial set of \text{Sing}(X) consisting of those singular simplices $\Delta^n \to X$ that send the vertices of $\Delta^n$ to $b$, $\mathbf{G}: \text{sSet} \to \text{sGrp}$ is the Kan loop group functor from simplicial sets to simplicial groups, $\Q_{\infty} G$ denotes the dimensionwise (algebraic) $\Q$-completion of any simplicial group $G$, $\overline{\mathbf{W}}: \text{sGrp} \to \text{sSet}$ is the classifying space functor, and $| \cdot |$ denotes geometric realization. Note we have used a slightly different notation from that in \cite{BK71}. 

\begin{definition} 
A continuous map $f:X \rightarrow Y$ between path-connected spaces is a \textit{$\Q_{\infty}$-homotopy equivalence} if $\Q_\infty f : \Q_\infty X \rightarrow \Q_\infty Y$, the induced map on the $\Q$-completions, is a weak homotopy equivalence. 
\end{definition}

The rational singular chains are able to detect $\Q_{\infty}$-homotopy equivalences in the following sense.

\begin{proposition} \label{BKeq} \cite{BK71} Let $f: (X,b) \to (Y,c)$ be a  continuous map of pointed path-connected spaces. Then, the following are equivalent:
\begin{enumerate}
\item The map $f: (X,b) \to (Y,c)$ is a $\Q_{\infty}$-homotopy equivalence, i.e. the induced map between $\Q$-completions $\Q_{\infty}f: \Q_{\infty}X \to \Q_{\infty}Y$ is a weak homotopy equivalence.
\item The induced map $C_*(f; \mathbb{Q}): C_*(X,b;\mathbb{Q}) \to C_*(Y,c; \mathbb{Q})$ is a quasi-isomorphism. 
\vspace{-8pt}
\item[] \hspace{-42pt} Furthermore, if $X$ and $Y$ are both nilpotent spaces then (1) and (2) are equivalent to
\vspace{4pt}
\item  $f: (X,b) \to (Y,c)$ induces an isomorphism between Malcev completions of the fundamental groups and an isomorphism of higher rationalized homotopy groups. 
\end{enumerate}
\end{proposition}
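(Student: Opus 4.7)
The plan is to deduce the proposition from the fundamental properties of the $\Q$-completion tower established in \cite{BK71}. The first preparatory move is to identify condition (2) with the statement that $f$ is a rational singular homology isomorphism. The pointed normalized chain complex $C_*(X,b;\Q)$ is naturally quasi-isomorphic to the normalized chains on the subsimplicial set $\text{Sing}(X,b)\subset \text{Sing}(X)$, and the inclusion $\text{Sing}(X,b)\hookrightarrow \text{Sing}(X)$ is a weak equivalence of simplicial sets whenever $X$ is path-connected (by a standard subdivision/transfer argument). Hence $H_*(C_*(X,b;\Q))\cong H_*(X;\Q)$ naturally, so (2) is equivalent to $f$ being a rational singular homology equivalence in the classical sense.

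For $(1)\Leftrightarrow(2)$, I would appeal directly to the cosimplicial resolution machinery of \cite{BK71}. Recall that $\Q_\infty X$ is built as the totalization (via $\overline{\mathbf{W}}$ and geometric realization) of a cosimplicial resolution associated to the triple whose underlying functor freely adjoins $\Q$-coefficients. Three ingredients are then needed: (i) for every path-connected $X$ the natural map $\rho\colon X\to \Q_\infty X$ induces an isomorphism on $H_*(-;\Q)$, a consequence of the Bousfield–Kan homology spectral sequence of the resolution collapsing so that $H_*(X;\Q)$ is the $s=0$ column and all higher columns vanish rationally; (ii) a levelwise weak equivalence of cosimplicial resolutions induces a weak equivalence of totalizations; and (iii) the map of cosimplicial resolutions induced by $f$ is a levelwise weak equivalence iff $f$ is a rational homology equivalence, because each level is essentially the free $\Q$-module on simplices. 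Combining these, $\Q_\infty f$ is a weak equivalence iff $f$ is an $H_*(-;\Q)$-isomorphism; the two-out-of-three argument using (i) applied to the naturality square of $\rho$ closes the loop.

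For the nilpotent case, $(2)\Leftrightarrow(3)$ reduces to the classical rational Whitehead/Hurewicz package. A nilpotent space admits a principal refinement of its Postnikov tower with fibers $K(A,n)$ for abelian $A$ (with nilpotent $\pi_1$-action), and Malcev completion plays the role of rationalization on the (possibly nonabelian) $\pi_1$-stage. One argues by induction up the tower: a map $f\colon X\to Y$ is a rational homology equivalence iff the induced map on each associated fiber is, iff the rationalized (or Malcev-completed) map on homotopy groups is an isomorphism in each degree. The base case for $K(\pi,n)$ spaces is the classical computation of their rational homology together with the fact that the Malcev completion of a nilpotent group is detected by, and detects, rational group homology. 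This assembles to the desired equivalence with (3).

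The main obstacle is making $(1)\Leftrightarrow(2)$ precise: one must control the homotopy type of the totalization of a cosimplicial simplicial group and verify that rational homology is detected stage-by-stage in the $\Q$-completion tower, which is the substantive content of \cite{BK71}. A cleaner modern route would be to invoke the existence of an $H_*(-;\Q)$-local model structure on simplicial sets (as in Goerss–Jardine) in which $\Q_\infty$ is a fibrant replacement; the equivalence of (1) and (2) then holds by definition of the localization, reducing the proposition to identifying the cosimplicial construction of \cite{BK71} with this localization.
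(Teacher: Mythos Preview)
Your argument is correct, but the route differs from the paper's in emphasis and in how the nilpotent case is handled. For $(1)\Leftrightarrow(2)$ the paper simply cites 5.2 of \cite{BK71}, whereas you unpack that citation: the identification of $C_*(X,b;\Q)$ with ordinary rational chains, the rational homology isomorphism $\rho\colon X\to\Q_\infty X$, and the tower/totalization argument are exactly what underlies Bousfield--Kan's statement, so your sketch is a faithful expansion rather than a new approach. The genuine divergence is in the nilpotent case. The paper argues $(1)\Leftrightarrow(3)$ directly: for nilpotent $X$ one knows $X$ is $\Q$-good, so $\pi_n(\Q_\infty X)\cong \pi_n(X)\otimes\Q$ for $n\ge 2$ and $\pi_1(\Q_\infty X)$ is the Malcev completion of $\pi_1(X)$; then $\Q_\infty f$ being a weak equivalence is tautologically the same as (3). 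You instead argue $(2)\Leftrightarrow(3)$ via a principal Postnikov refinement and the rational Whitehead/Hurewicz theorem stage by stage. Both are valid; the paper's route is shorter once one accepts $\Q$-goodness of nilpotent spaces as a black box, while yours is more self-contained and makes clearer why nilpotence (rather than mere path-connectedness) is the hypothesis that allows the Postnikov induction to go through.
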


\begin{proof} The equivalence between (1) and (2) is exactly 5.2 in \cite{BK71}. If $X$ is a nilpotent space then, by combining the results of \cite{BK71}, we see that  $\rho: X \to \Q_{\infty}X$ induces an isomorphism on the Malcev completions of the fundamental groups and an isomorphism $\pi_n(\rho) \otimes \Q: \pi_n(X) \otimes \Q \cong \pi_n(\Q_{\infty}X)$ for all $n \geq 2$. This is because nilpotent spaces are $\Q$-good and the $\Q$-completion of a nilpotent group is given by the Malcev completion. It therefore follows that if $X$ and $Y$ are both nilpotent then (3) is equivalent to (1). 
\end{proof}

In \cite{BK71}, Bousfield and Kan also define a second possibility of completion for non-simply-connected spaces called \emph{fiberwise $\Q$-completion}. This is done by fiberwise $\Q$-completing the fibration $\tilde{X} \to X  \to B\pi_1(X,b)$, where $\tilde{X}$ denotes the universal cover of $X$ and $B\pi_1(X,b)$ is the classifying space of $\pi_1(X,b)$. The fiberwise $\Q$-completion uses the algebraic \textit{relative $\Q$-completion} for a short exact sequence of groups. This results in a fibration $$\mathbb{Q}_{\infty}\tilde{X} \to \mathbb{Q}^*_{\infty}X \to B\pi_1(X,b),$$ naturally associated to any $X$, whose fiber is the ordinary $\Q$-completion of $\tilde{X}$. Thus, to any space $X$ we may functorially associate a new space $\mathbb{Q}^*_{\infty}X$ which has the same fundamental group as $X$ and whose higher homotopy groups are the rationalized homotopy groups of $X$. 

\begin{definition}\label{def:ratequivalence}
A continuous map $f:X \rightarrow Y$ between path-connected spaces is a \textit{$\pi_1$-rational homotopy equivalence} if $\Q_\infty ^*f : \Q_\infty^* X \rightarrow \Q_\infty^* Y$, the induced map on the fiberwise $\Q$-completions, is a weak homotopy equivalence. 
\end{definition}
Our next result gives three alternative characterizations of $\pi_1$-rational homotopy equivalences in terms of the coalgebras of pointed normalized singular chains, homotopy groups, and homology with local coefficients.

\begin{theorem} \label{coalgebrasandrationaleqs}
Let $f: (X,b) \to (Y,c)$ be a  continuous map of pointed path-connected spaces. Then, the following are equivalent
\begin{enumerate}
 \item The map $f: (X,b) \to (Y,c)$ is a $\pi_1$-rational homotopy equivalence, i.e the induced map between fiberwise $\Q$-completions $\Q_{\infty}^*f: \Q_{\infty}^*X \to \Q_{\infty}^*Y$ is a weak homotopy equivalence.
 \item The induced maps $\pi_1(f): \pi_1(X,b) \to \pi_1(Y,c)$ and $\pi_n(f) \otimes \Q: \pi_n(X,b)\otimes \Q \to \pi_n(Y,c)\otimes \Q$ for $n \geq 2$ are isomorphisms.
  \item The induced map $C_*(f; \mathbb{Q}): C_*(X,b;\mathbb{Q}) \to C_*(Y,c; \mathbb{Q})$ is an $\Omega$-quasi-iso\-mor\-phism.
 \item The induced map $\pi_1(f) :\pi_1(X,b) \rightarrow \pi_1(Y,c)$ is an isomorphism and for every $\Q$-representation $A$ of $\pi_1(Y,c)$ the induced map on the homology with local coefficients $H_*(f):H_*(X;f^*A)\rightarrow H_*(Y;A)$ is an isomorphism. 
\end{enumerate}
\end{theorem}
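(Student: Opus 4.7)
The plan is to prove the chain of equivalences (1) $\Leftrightarrow$ (2), (2) $\Leftrightarrow$ (3), and (2) $\Leftrightarrow$ (4). The first is essentially tautological from the construction of the fiberwise $\Q$-completion: Bousfield--Kan arrange that $\pi_1(\Q_{\infty}^* X) \cong \pi_1(X,b)$ and $\pi_n(\Q_{\infty}^* X) \cong \pi_n(X,b) \otimes \Q$ for $n \geq 2$, so $\Q_{\infty}^* f$ is a weak equivalence if and only if it induces isomorphisms on all homotopy groups if and only if (2) holds.

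For (2) $\Leftrightarrow$ (4), I would work on the universal covers: in both conditions $\pi_1(f)$ is an isomorphism, so $f$ lifts to $\tilde f: \tilde X \to \tilde Y$. Taking $A = \Q[\pi_1(Y,c)]$ as the regular representation gives $H_*(Y;A) \cong H_*(\tilde Y;\Q)$ and $H_*(X;f^*A) \cong H_*(\tilde X;\Q)$, and the classical simply-connected rational homotopy theory converts between $H_*(\tilde f;\Q)$ iso and $\pi_n(\tilde f)\otimes \Q \cong \pi_n(f) \otimes \Q$ iso for $n \geq 2$. Conversely, given $H_*(\tilde f;\Q)$ iso and $\pi_1(f)$ iso, the Cartan--Leray spectral sequence $E_2^{p,q} = H_p(\pi_1; H_q(\tilde X;\Q)\otimes A) \Rightarrow H_{p+q}(X; f^*A)$ and its analogue for $Y$ yield a map of spectral sequences which is iso on $E_2$, hence iso on abutments for every $\Q$-representation $A$.

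The central equivalence (2) $\Leftrightarrow$ (3) rests on the extended Adams theorem of \cite{RZ16}, a natural quasi-isomorphism of dg bialgebras $\theta: \Omega C_*(X,b;\Q) \xrightarrow{\simeq} C^\square_*(\Omega_b X;\Q)$, reducing (3) to asking that $H_*(\Omega_b f;\Q)$ be an isomorphism. The path components of $\Omega_b X$ are indexed by $\pi_1(X,b)$ and each is homotopy equivalent to the loop space $\Omega_{\tilde b}\tilde X$ of the (simply-connected) universal cover. For (2) $\Rightarrow$ (3), the lift $\tilde f$ is a rational equivalence of simply-connected spaces, so $H_*(\Omega_{\tilde b}\tilde f;\Q)$ is iso by the classical theory (and Milnor--Moore, Theorem \ref{thrm:milnor-moore}); combined with $\pi_1(f)$ iso this yields $H_*(\Omega_b f;\Q)$ iso on each component. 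For (3) $\Rightarrow$ (2), the $H_0$ part of the iso forces $\pi_1(f)$ iso since group elements form a $\Q$-basis of the group algebra; restriction to the base components gives a graded Hopf algebra isomorphism $H_*(\Omega_{\tilde b}\tilde f;\Q)$ (using that $\theta$ is a bialgebra map), and applying Milnor--Moore to the simply-connected $H$-space $\Omega_{\tilde b}\tilde X$ identifies the primitives with $\pi_{*+1}(X)\otimes\Q$, so the induced iso on primitives is exactly $\pi_n(f)\otimes\Q$ iso for $n \geq 2$.

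The step I expect to be the main obstacle is (3) $\Rightarrow$ (2): one must exploit the full Hopf algebra structure on the loop space homology to extract the rationalized higher homotopy groups via the primitive functor, which requires care to ensure the decomposition of $\Omega_b X$ into path components indexed by $\pi_1(X,b)$ is compatible with both the Hopf algebra structure and Adams' quasi-isomorphism. The bialgebra-level enhancement of Adams' theorem in \cite{RZ16} is precisely what makes this extraction possible.
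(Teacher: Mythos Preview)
Your proposal is correct but organizes the equivalences differently from the paper. You prove $(2)\Leftrightarrow(3)$ and $(2)\Leftrightarrow(4)$ independently, whereas the paper runs the cycle $(2)\Rightarrow(3)\Rightarrow(4)\Rightarrow(2)$. Your $(4)\Rightarrow(2)$ and $(2)\Rightarrow(3)$ arguments match the paper's. The differences lie elsewhere: for $(3)\Rightarrow(2)$ you argue directly---after recovering $\pi_1(f)$ from $H_0$, you restrict to the identity component of $\Omega_bX$, invoke Milnor--Moore to identify primitives with $\pi_*(\Omega\tilde X)\otimes\Q$, and read off the higher $\pi_n(f)\otimes\Q$---whereas the paper never proves this implication directly, instead routing through $(4)$ via Proposition~10 of \cite{RWZ18} (which converts $\Omega$-quasi-isomorphisms into local-coefficient isomorphisms). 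For $(2)\Rightarrow(4)$ you use the Cartan--Leray spectral sequence of $\tilde X\to X\to B\pi_1$, while the paper obtains $(4)$ only as a consequence of the cycle. Your route is more self-contained (no external appeal to \cite{RWZ18}) and makes the use of Milnor--Moore symmetric in both directions of $(2)\Leftrightarrow(3)$; the paper's route has the advantage of exhibiting the direct link between $\Omega$-quasi-isomorphism and local coefficients, which is of independent interest. One small remark: your $H_0$ argument (``group elements form a $\Q$-basis'') works because the isomorphism on $H_0$ is \emph{induced by} the group homomorphism $\pi_1(f)$ and hence sends basis to basis; the paper instead recovers $\pi_1$ as the group-like elements of the Hopf algebra $H_0(\Omega C_*)$, an argument that does not rely on knowing the provenance of the isomorphism.
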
 

In the above theorem, by a \textit{$\Q$-representation} of a group $G$ we mean a $\mathbb{Q}$-vector space $A$ together with a is a left $\mathbb{Q}[G]$-module structure. If $f: G'\to G$ is a group homomorphism and $A$ is a $\Q$-representation of $G$, then $f^*A$ denotes the $\Q$-representation of $G'$ given by the pullback action of $\mathbb{Q}[G']$ on $A$ via $f$. 

\begin{remark}
A direct observation that follows from Proposition \ref{BKeq} and Theorem \ref{coalgebrasandrationaleqs} is that both notions of rational homotopy equivalence can be deduced from the pointed normalized singular chains with rational coefficients. This might come as a surprise, since this means that the singular chains with rational chains are capable of capturing the highly non-rational fundamental group in the case of $\pi_1$-rational homotopy equivalences and the Malcev completion of the fundamental group in the case of $\Q_{\infty}$-homotopy equivalences. This can be interpreted as saying that both approaches to non-simply-connected rational homotopy theory are in fact rational, i.e. can be deduced from algebraic structure on a rational chain complex. We would further like to point out that both notions of rational homotopy equivalence coincide for simply connected spaces.
\end{remark}

Before we prove Theorem \ref{coalgebrasandrationaleqs} we first recall two results proven in \cite{RZ16} and \cite{RZ18} regarding an extension of the classical Adams' cobar theorem \cite{A56}. For completeness, we will sketch a proof of the following theorem and for a detailed proof we refer the reader to \cite{RZ18} or \cite{Ri19}. Let $\Omega_bX$ denote the topological monoid of Moore loops in $X$ based at $b \in X$. 

\begin{theorem} \label{thrm:adamsbialgebra}
Let $(X,b)$ be a pointed path-connected space. Then,
\begin{enumerate}
\item There is a natural quasi-isomorphism of dg algebras $$\theta: \Omega C_*(X,b;\mathbb{Q}) \to C^{\square}_*(\Omega_bX;\mathbb{Q}),$$ where $C^{\square}_*(\Omega_bX;\mathbb{Q})$ denotes the normalized singular cubical chains on $\Omega_bX$ with rational coefficients. 
\item There is a natural coassociative coproduct  $$\nabla: \Omega C_*(X,b;\mathbb{Q}) \to \Omega C_*(X,b;\mathbb{Q}) \otimes \Omega C_*(X,b;\mathbb{Q})$$ making $\Omega C_*(X,b;\mathbb{Q})$ a dg bialgebra such that $\theta$ becomes a quasi-isomorphism of dg bialgebras, when $C^{\square}_*(\Omega_bX;\mathbb{Q})$ is equipped with the natural diagonal approximation coproduct for cubical chains. 
\end{enumerate}
\end{theorem}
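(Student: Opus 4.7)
The plan is to construct $\theta$ explicitly at the level of generators using a cubical parametrization of loops through simplices, to verify $\theta$ is a quasi-isomorphism by comparison with Kan's loop group, and finally to define the coproduct $\nabla$ by transporting the cubical Alexander--Whitney diagonal on $C^{\square}_*(\Omega_bX;\mathbb{Q})$ back along $\theta$.

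For part (1), given an $n$-simplex $\sigma : \Delta^n \to X$ with all vertices sent to $b$, interpret $\sigma$ as a family of Moore loops at $b$ parametrized by the $(n-1)$-cube: a point $(t_1, \dots, t_{n-1})$ with $0 \le t_1 \le \dots \le t_{n-1} \le 1$ determines the loop concatenating the edges $\sigma|_{[v_0,v_1]}, \dots, \sigma|_{[v_{n-1},v_n]}$, with the $i$-th vertex reached at time $t_i$. Set $\theta(s^{-1}\bar\sigma) \in C^{\square}_{n-1}(\Omega_bX;\mathbb{Q})$ equal to this singular cube and extend $\theta$ multiplicatively over the tensor algebra using the Pontryagin product on cubical chains. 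The $(n-2)$-faces of the cube decompose into those where some $t_i$ equals $t_{i-1}$ (contributing $-s^{-1}\partial\sigma$) and those where some $t_i$ equals $0$ or $1$ (contributing the Pontryagin products coming from the Alexander--Whitney diagonal), matching the cobar differential term by term. To see $\theta$ is a quasi-isomorphism, factor it through Kan's loop group: since $\text{Sing}(X,b)$ is reduced, its Kan loop group $\mathbf{G}\text{Sing}(X,b)$ is a simplicial group whose geometric realization is naturally weakly equivalent to $\Omega_bX$, and Szczarba's classical comparison map supplies a natural quasi-isomorphism of dg algebras from $\Omega C_*(\text{Sing}(X,b);\mathbb{Q})$ to $C_*(\mathbf{G}\text{Sing}(X,b);\mathbb{Q})$, valid for any reduced simplicial set. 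Combined with the natural equivalence between cubical and simplicial chains on the loop group, this identifies $\theta$ with the Szczarba comparison up to chain homotopy.

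For part (2), define $\nabla$ on a generator $s^{-1}\bar\sigma$ by the formula
\[
\nabla(s^{-1}\bar\sigma) \;=\; s^{-1}\bar\sigma \otimes 1 \;+\; 1 \otimes s^{-1}\bar\sigma \;+\; \text{(correction terms)},
\]
where the correction terms encode how the singular cube $\theta(s^{-1}\bar\sigma)$ decomposes under the diagonal $I^{n-1} \to I^{n-1} \times I^{n-1}$; these corrections arise from higher Hirsch/cup-one type operations refining the Alexander--Whitney diagonal on $C_*(X,b;\mathbb{Q})$. One verifies that $\nabla$ is coassociative, compatible with the cobar differential, and a map of algebras, so that $\Omega C_*(X,b;\mathbb{Q})$ becomes a dg bialgebra; the intertwining relation with the cubical Alexander--Whitney coproduct on $C^{\square}_*(\Omega_bX;\mathbb{Q})$ holds on generators by construction and extends to all of $\Omega C_*(X,b;\mathbb{Q})$ by multiplicativity. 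The main obstacle is the quasi-isomorphism of part (1) outside the simply-connected range: the Eilenberg--Moore spectral-sequence argument underlying the classical Adams theorem fails to converge without simple-connectivity, and the Kan loop group comparison is exactly what replaces it. A secondary subtlety is to arrange the higher Hirsch operations so that $\nabla$ is strictly coassociative rather than merely coassociative up to higher homotopies, which is what makes the coproduct well-defined on the nose.
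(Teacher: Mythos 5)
Your explicit Adams-style construction in part (1) --- parametrizing the loops read off from a pointed $n$-simplex by an $(n-1)$-cube and matching the cube's faces against the cobar differential --- is faithful to the construction underlying the paper's proof, which realizes $\Omega C_*(X,b;\mathbb{Q})$ as the chains on a monoidal cubical set with connections $\mathfrak{C}_{\square_c}(\mathrm{Sing}(X,b))$ whose realization is the based loop space \cite{RZ16}. The genuine gap is in how you certify that $\theta$ is a quasi-isomorphism. You appeal to ``Szczarba's classical comparison map \dots valid for any reduced simplicial set,'' but that validity is precisely the non-classical content of the theorem: Szczarba's twisting cochain is classical, and the resulting algebra map $\Omega C_*(K)\to C_*(\mathbf{G}K)$ was classically known to be a quasi-isomorphism only for $1$-reduced $K$ (the filtration/Eilenberg--Moore comparison you yourself note fails without simple connectivity is exactly what that argument uses). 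Extending it to merely reduced $K$ is equivalent to the statement being proven, so your argument is circular at the decisive step. The paper instead leans on \cite{RZ16}, where the quasi-isomorphism is obtained from the natural isomorphism of dg algebras $\Omega C_*(X,b)\cong C^{\square}_*(\mathfrak{C}_{\square_c}(\mathrm{Sing}(X,b)))$ together with a homotopy equivalence $|\mathfrak{C}_{\square_c}(\mathrm{Sing}(X,b))|\simeq \Omega_bX$, with no convergence argument needed.

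Part (2) has a second, related problem: you propose to ``transport the cubical Alexander--Whitney diagonal back along $\theta$,'' but $\theta$ is only a quasi-isomorphism, so a coproduct cannot be pulled back along it on the nose; and your fallback via Hirsch/cup-one corrections leaves strict coassociativity as an admitted unresolved ``subtlety.'' The point you are missing is the same isomorphism as above: since $\Omega C_*(X,b;\mathbb{Q})$ is \emph{isomorphic} (not merely quasi-isomorphic) to the cubical chains on a cubical set, it inherits the Serre diagonal of cubical chains, which is strictly coassociative and compatible with the differential and the monoidal product by general nonsense about cubical sets. Transporting along that isomorphism is what makes $\nabla$ well defined and strictly coassociative for free, and it is also why $\theta$ respects coproducts (the comparison map from $C^{\square}_*(\mathfrak{C}_{\square_c}(\mathrm{Sing}(X,b)))$ to $C^{\square}_*(\Omega_bX;\mathbb{Q})$ preserves the cubical diagonal). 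Your identification of the correction terms with surjection-operad/Hirsch operations is consistent with Remark~\ref{surjection}, but as written it does not yield a proof of strict coassociativity.
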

\begin{proof}[Sketch of proof.] ${}$
\begin{enumerate}
\item The fact that there exists a quasi-isomorphism of dg algebras $\theta: \Omega C_*(X,b;\mathbb{Q})$ $\to C^{\square}_*(\Omega_bX;\mathbb{Q})$ is an extension of a classical theorem of Adams proven in \cite{RZ16} by observing that $\Omega C_*(X,b;\mathbb{Q})$ is naturally isomorphic as a dg algebra to the chains on a monoidal cubical set with connections (denoted by $\mathfrak{C}_{\square_c}(\text{Sing}(X,b))$ in \cite{RZ16} and \cite{RZ18}), whose geometric realization is naturally homotopy equivalent to the based loop space. This natural homotopy equivalence induces a quasi-isomorphism from the dg algebra of chains on $\mathfrak{C}_{\square_c}(\text{Sing}(X,b))$ to $C^{\square}_*(\Omega_bX;\mathbb{Q})$.
\item The chain complex of normalized cubical chains on any cubical set (with or without connections) has a natural coassociative coproduct approximating the diagonal map. The quasi-isomorphism from the cubical chains on $\mathfrak{C}_{\square_c}(\text{Sing}(X,b))$ to $C^{\square}_*(\Omega_bX;\mathbb{Q})$ preserves coproducts. To construct $\nabla$, we transfer the coproduct of the cubical chains on  $\mathfrak{C}_{\square_c}(\text{Sing}(X,b))$ to $\Omega C_*(X,b;\mathbb{Q})$ via the isomorphism between them. 
\end{enumerate}
\end{proof}

\begin{remark} \label{surjection} The construction of the coproduct on $\Omega C_*(X,b;\mathbb{Q})$ builds up on an idea originally described in \cite{B98}. It is related to the $E_{\infty}$-coalgebra structure on $C_*(X,b;\mathbb{Q})$ as follows. It is shown in \cite{BF04} and \cite{MS02} that $C_*(X,b;\mathbb{Q})$ has a natural structure of a coalgebra over the \textit{surjection operad}, usually denoted by $\rchi$, extending the dg coassociative coalgebra structure given by the Alexander-Whitney diagonal approximation. The surjection operad $\rchi$ is a particular model for the $E_{\infty}$-operad. It is explained in \cite{K03} that for any connected $\rchi$-coalgebra $\mathbf{C}$ with underlying dg coassociative coalgebra $C$, the structure maps of the $\rchi$-coalgebra structure corresponding to the $E_2$ portion of the operad induce a dg bialgebra structure on $\Omega C$. In the case of $C_*(X,b;\mathbb{Q})$, the coproduct of the dg bialgebra structure on $\Omega C_*(X,b;\mathbb{Q})$ coincides with the coproduct $\nabla$ outlined in the proof of Theorem \ref{thrm:adamsbialgebra}. More details may be found in Theorems 2 and 3 of \cite{RZ18}.
\end{remark}

\begin{remark} Two other extensions of Adams' cobar theorem to the non-simply connected case may be found in \cite{FHT92} and \cite{HT10}. These approaches add formal inverses for the $1$-simplices in different ways. In \cite{FHT92}, an extension of Adams' construction is described for non-simply connected CW-complexes by adding homotopy inverses for all $1$-simplices together with formal homotopies at the level of the cellular chains. In \cite{HT10}, a different extension is described for any simplicial set by adding formal (strict) inverses for all $1$-simplices after applying the cobar construction. In the approach of the first and third authors in \cite{RZ16} no inverses must be added since the construction is preformed at the level of the Kan complex of singular chains, which already contains inverses up to homotopy. 
\end{remark}

Since there is an isomorphism of bialgebras $H_0(\Omega_b X;\mathbb{Q}) \cong \mathbb{Q} [\pi_1(X,b)]$ and any group algebra is a Hopf algebra (a bialgebra which has an antipode map) whose group-like elements are the underlying group, we immediately obtain the following corollary. 

\begin{corollary}\label{fundgroup} The map $\theta: \Omega C_*(X,b;\mathbb{Q}) \to C^{\square}_*(\Omega_bX;\mathbb{Q})$ induces an isomorphism of bialgebras $H_0(\Omega C_*(X,b;\mathbb{Q})) \cong \mathbb{Q}[\pi_1(X,b)]$. In particular, there exists an antipode on the bialgebra $H_0(\Omega C_*(X,b;\mathbb{Q}))$, which makes it a Hopf algebra. The isomorphism class of the fundamental group of $X$ may be recovered functorially as the group of group-like elements of $H_0(\Omega C_*(X,b;\mathbb{Q}))$.
\end{corollary}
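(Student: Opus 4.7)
The plan is to deduce everything from Theorem \ref{thrm:adamsbialgebra}(2) by passing to $H_0$ and then invoking standard properties of group algebras. Since $\theta$ is a quasi-isomorphism of dg bialgebras, taking $H_0$ yields an isomorphism of bialgebras
\[
H_0(\theta) \colon H_0(\Omega C_*(X,b;\mathbb{Q})) \xrightarrow{\cong} H_0(C^{\square}_*(\Omega_b X;\mathbb{Q})).
\]
I would then identify the target with $\mathbb{Q}[\pi_1(X,b)]$ as bialgebras. Because $\Omega_b X$ is a topological monoid, its rational $H_0$ inherits a bialgebra structure whose product is the Pontryagin product and whose coproduct is induced by the diagonal of $\Omega_b X$; on path components the Pontryagin product recovers the group law of $\pi_1(X,b) = \pi_0(\Omega_b X)$, and the diagonal descends to $[\gamma] \mapsto [\gamma] \otimes [\gamma]$, so each class becomes group-like. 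Composing this identification with $H_0(\theta)$ produces the claimed bialgebra isomorphism.

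For the Hopf algebra statement, I would transport the canonical antipode $g \mapsto g^{-1}$ on $\mathbb{Q}[\pi_1(X,b)]$ along this isomorphism; by the uniqueness of antipodes recalled in Section 2, the resulting antipode is well-defined and natural in $(X,b)$. To recover $\pi_1(X,b)$ itself, I would invoke the classical fact that the group-like elements of $\mathbb{Q}[G]$ are exactly the elements of $G$: writing a group-like element as $x = \sum c_i g_i$ with distinct $g_i$ and comparing the two expressions $\Delta(x) = x \otimes x = \sum c_i c_j\, g_i \otimes g_j$ and $\Delta(x) = \sum c_i\, g_i \otimes g_i$ forces $c_i c_j = 0$ for $i \neq j$ and $c_i^2 = c_i$, so $x$ must be a single basis vector $g_i$. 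Since the group of group-likes is functorial in bialgebra maps, the assignment $X \mapsto \pi_1(X,b)$ is recovered functorially from $H_0(\Omega C_*(X,b;\mathbb{Q}))$.

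The only mildly subtle step is the identification of $H_0(C^{\square}_*(\Omega_b X;\mathbb{Q}))$ with $\mathbb{Q}[\pi_1(X,b)]$ at the bialgebra level: one has to check that the cubical Alexander--Whitney diagonal used to define the coproduct in Theorem \ref{thrm:adamsbialgebra} descends on $\pi_0$ to the set-theoretic diagonal. This is standard but is the only place where one interacts with the actual chain-level coproduct; once in hand, every other assertion in the corollary is a purely formal consequence.
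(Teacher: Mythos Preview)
Your proposal is correct and follows exactly the argument the paper gives, which is contained in the single sentence preceding the corollary: pass to $H_0$ along the bialgebra quasi-isomorphism $\theta$, identify $H_0(\Omega_b X;\mathbb{Q})\cong \mathbb{Q}[\pi_1(X,b)]$ as bialgebras, and then use the standard facts that a group algebra is a Hopf algebra whose group-like elements recover the group. You have simply made explicit the routine verifications (the cubical diagonal descending to the set-theoretic diagonal on $\pi_0$, the computation of group-likes in $\mathbb{Q}[G]$) that the paper leaves to the reader.
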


We now use the above results to prove Theorem \ref{coalgebrasandrationaleqs}. An integral version of the equivalence between (2), (3), and (4)  of Theorem \ref{coalgebrasandrationaleqs} was proven in \cite{RWZ18}. 

\begin{proof}[Proof of Theorem \ref{coalgebrasandrationaleqs}.] The equivalence between (1) and (2) follows directly from the fact that $\Q_{\infty}^*X$ is the total space of a fibration $\Q_{\infty}\tilde{X} \to \Q_{\infty}^*X \to B\pi_1(X,b)$, since then we can use the long exact sequence for a fibration together the natural isomorphisms $\pi_n(\Q_{\infty}\tilde{X}) \cong \pi_n(X) \otimes \Q$. 

 The proof that (3) implies (4) is similar to the proof of Theorem 12 in \cite{RWZ18} as we now explain. If $C_*(f; \mathbb{Q}): C_*(X,b;\mathbb{Q}) \to C_*(Y,c;\mathbb{Q})$ is an $\Omega$-quasi-isomorphism, then $f$ induces an isomorphism of Hopf algebras $$H_0(\Omega(f)): H_0(\Omega C_*(X,b; \mathbb{Q})) \xrightarrow{\cong} H_0 (\Omega C_*(Y,c;\mathbb{Q}))$$ and, by Corollary \ref{fundgroup}, after applying the group-like elements functor to the isomorphism $H_0(\Omega(f))$ we obtain that $\pi_1(f): \pi_1(X,b) \to \pi_1(Y,c)$ is an isomorphism. It follows from Proposition 10 of \cite{RWZ18} that if $C_*(f): C_*(X,b;\mathbb{Q}) \to  C_*(Y,c;\mathbb{Q})$ is an $\Omega$-quasi-isomorphism, then the induced map on homology with coefficients in any rational local system is an isomorphism. 
 
To prove (4) implies (2) we consider the pointed universal covers of the pointed spaces $(X,b)$ and $(Y,c)$, which we denote by $(\tilde{X}, [b]) $ and $(\tilde{Y}, [c])$, respectively. By a standard lifting argument we get an induced map $\tilde{f}: (\tilde{X}, [b]) \rightarrow (\tilde{Y}, [c])$, which is unique up to homotopy. As explained in Section 5.2 of \cite{DK01}, the rational homology of the universal cover of $X$ can be computed by the homology of $X$ with local coefficients in the fundamental group algebra, i.e.  $H_*(\tilde{X};\Q)\cong H_*(X;\Q[\pi_1(X,b)])$, where $\Q[\pi_1(X,b)]$ is the representation of $\pi_1(X, b)$ through the left multiplication of $\pi_1(X, b)$ on itself. By assumption, this implies that $\tilde{f}$ induces an isomorphism on the rational homology of the universal covers. Consequently, since the universal covers are simply connected, Whitehead's Theorem yields that $\pi_n(\tilde{f}) \otimes \mathbb{Q}:\pi_n(\tilde{X}, [b])\otimes\Q \rightarrow \pi_n(\tilde{Y},[c]) \otimes \Q$ are isomorphisms for all $n \geq 2$.  It then follows from the long exact sequence in homotopy that $\pi_n(f) \otimes \mathbb{Q} :\pi_n(X,b)\otimes\Q \rightarrow \pi_n(Y,c) \otimes \Q$ are isomorphisms for all $n \geq 2$ as well.

We now show (2) implies (3). Suppose the maps $\pi_1(f):\pi_1(X,b) \rightarrow \pi_1(Y,c)$ and $\pi_n(f) \otimes \mathbb{Q} :\pi_n(X,b) \otimes \Q \rightarrow \pi_n(Y,c)\otimes \Q$ for $n \geq 2$ are isomorphisms. By Theorem \ref{thrm:adamsbialgebra} this is equivalent to show that $f$ induces an isomorphism on homology for the corresponding loop spaces. Again we will look at the universal covers to deduce the results. In particular, the lift $\tilde{f}$ induces an isomorphism $$\pi_n(\tilde{f}) \otimes \mathbb{Q}: \pi_n(\tilde{X}) \otimes \mathbb{Q} \xrightarrow{\cong} \pi_n(\tilde{Y}) \otimes \mathbb{Q}$$ for each $n \geq 1$. Therefore $$\pi_{n-1}(\Omega(\tilde{f})) \otimes \mathbb{Q}: \pi_{n-1}(\Omega_{[b]} \tilde{X}) \otimes \mathbb{Q} \xrightarrow{\cong} \pi_{n-1}(\Omega_{[c]} \tilde{Y}) \otimes \mathbb{Q}$$ are isomorphisms for each $n \geq 1$. This means that the map $\pi_n(\Omega(\tilde{f})) \otimes \mathbb{Q}$ is an isomorphism of Lie algebras and applying the universal enveloping algebra functor to each of these maps, by the Milnor-Moore Theorem (Theorem \ref{thrm:milnor-moore}), we get an isomorphism $$H_*(\Omega \tilde{f}): H_*(\Omega_{[b]} \tilde{X}; \mathbb{Q}) \xrightarrow{\cong} H_*(\Omega_{[c]} \tilde{Y}; \mathbb{Q}).$$ Recall there is a homotopy equivalence $ \Omega_b X \simeq \Omega_{[b]} \tilde{X}\times \pi_1(X,b)^{disc}$, where $\pi_1(X,b)^{disc}$ denotes $\pi_1(X,b)$ considered as a discrete space,  inducing a natural isomorphism of graded vector spaces $$H_*(\Omega_bX;\mathbb{Q}) \cong H_*(\Omega_{[b]} \tilde{X};\mathbb{Q}) \otimes H_*( \pi_1(X,b)^{disc};\mathbb{Q}).$$ Similarly, $$H_*(\Omega_cY;\mathbb{Q}) \cong H_*(\Omega_{[c]} \tilde{Y};\mathbb{Q}) \otimes H_*( \pi_1(Y,c)^{disc};\mathbb{Q}).$$ The latter two isomorphisms together with the fact that $H_*(\Omega \tilde{f})$ and $\pi_1(f)$ are isomorphisms imply that $H_*( \Omega (f) ): H_*( \Omega_bX; \mathbb{Q} ) \to H_*(\Omega_cY; \mathbb{Q})$ is an isomorphism as well. Finally, by Theorem \ref{thrm:adamsbialgebra}, it follows that $\Omega C_*(f): \Omega C_*(X,b;\mathbb{Q}) \to \Omega C_*(Y,c;\mathbb{Q})$ is a quasi-isomorphism, as desired. 
\end{proof}

\section{The cocommutative chains problem}

 The goal of this section is to give a negative answer to Question \ref{chainsproblem}. If we restrict our attention to simply connected spaces and coalgebras with quasi-isomorphisms as weak equivalences then Quillen's rational homotopy theory provides a positive answer to the cocommutative chains problem: naturally associated to any simply connected space there is a dg Lie algebra whose Chevalley-Eilenberg complex is a cocommutative dg coassociative coalgebra quasi-isomorphic to the rational chains on the space \cite{Q69}. However, if we consider the problem for all path-connected spaces and for coalgebras with $\Omega$-quasi-isomorphisms as weak equivalences, as asked in Question \ref{chainsproblem}, the answer turns out to be negative. 
 
Suppose there is a cocommutative dg coalgebra $\mathcal{C}(X,b)$ such that $\Omega \mathcal{C}(X,b)$ is quasi-isomorphic as a dg algebra to $\Omega C_*(X,b; \mathbf{k})$. By Corollary \ref{fundgroup},  $H_0(\Omega C_*(X,b; \mathbf{k})) \cong \mathbf{k}[\pi_1(X,b)]$, so for a hypothetical strictly cocommutative model $\mathcal{C}(X,b)$ we would also have an isomorphism of algebras $H_0(\Omega \mathcal{C}(X,b)) \cong \mathbf{k}[\pi_1(X,b)]$. In the proof of the upcoming theorem we show that, if $C$ is a cocommutative dg coalgebra, $H_0(\Omega C)$ is always isomorphic as a vector space to a polynomial algebra. Therefore $H_0(\Omega C)$ is always infinite dimensional which means it cannot be isomorphic to the group algebra of a finite group. 

Recall $\text{CDGC}_{\mathbf{k}}$ denotes the category of cocommutative dg  $\mathbf{k}$-coalgebras and $\text{Top}_*$ the category of pointed path-connected spaces. 

\begin{theorem}\label{thrm21} Let $\mathbf{k}$ be a field of characteristic zero. 
There is no functor 
\[
\mathcal{C}: \text{Top}_* \to CDGC_{\mathbf{k}}
\]
such that for any $(X,b) \in \text{Top}_*$, $\mathcal{C}(X,b)$ is $\Omega$-quasi-isomorphic to $C_*(X,b;\mathbf{k})$ as dg coassociative coalgebras. 
\end{theorem}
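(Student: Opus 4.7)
The plan is to derive a contradiction by computing $H_0$ of the cobar construction in two different ways and then choosing a test space where the answers disagree. For any cocommutative dg coalgebra $C$, Lemma \ref{lem:cobaranduniversalenvelopingalgebra} combined with Theorem \ref{thrm:Universalenvelopecommuteswithhomology} gives isomorphisms
\[
H_0(\Omega C) \;\cong\; H_0(U\mathcal{L} C) \;\cong\; U\bigl(H_0(\mathcal{L} C)\bigr).
\]
Now $\mathfrak{g} := H_0(\mathcal{L} C)$ is a Lie algebra concentrated in degree zero, so its enveloping algebra is an ordinary (ungraded) universal enveloping algebra. The Poincar\'e--Birkhoff--Witt isomorphism (Theorem \ref{thrm:PBW}) yields an isomorphism of $\mathbf{k}$-vector spaces $U\mathfrak{g} \cong S\mathfrak{g}$, where $S\mathfrak{g}$ is the classical polynomial algebra on the underlying vector space of $\mathfrak{g}$. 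Consequently $\dim_{\mathbf{k}} H_0(\Omega C) = 1$ if $\mathfrak{g} = 0$, and $\dim_{\mathbf{k}} H_0(\Omega C) = \infty$ otherwise, since for any $0 \neq x \in \mathfrak{g}$ the powers $1, x, x^2, \dots$ are linearly independent in $S\mathfrak{g}$ (this uses that $x$ sits in degree zero, so its square does not vanish in the graded symmetric algebra).

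Next I would record what $H_0(\Omega C_*(X,b;\mathbf{k}))$ looks like on the topological side. By Corollary \ref{fundgroup}, this algebra is isomorphic to the group algebra $\mathbf{k}[\pi_1(X,b)]$. If a cocommutative $\mathcal{C}(X,b)$ were $\Omega$-quasi-isomorphic to $C_*(X,b;\mathbf{k})$ via a zig-zag of coaugmented conilpotent dg coalgebras, then applying $\Omega$ would give a chain of quasi-isomorphisms of dg algebras, and therefore a $\mathbf{k}$-linear isomorphism
\[
H_0(\Omega \mathcal{C}(X,b)) \;\cong\; \mathbf{k}[\pi_1(X,b)].
\]

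To finish, I would specialize to any pointed path-connected space whose fundamental group is finite and nontrivial; for definiteness take $X = B(\Z/2\Z)$, so $\dim_{\mathbf{k}} \mathbf{k}[\pi_1(X,b)] = 2$. Combining the two computations, $S\mathfrak{g}$ would have to be $2$-dimensional over $\mathbf{k}$, contradicting the fact that this dimension is $1$ or $\infty$. This already rules out the pointwise existence of the value $\mathcal{C}(X,b)$, so a fortiori no functor $\mathcal{C}$ can exist; functoriality is not actually needed. The only substantive step is the PBW-based description of $H_0(\Omega C)$ for cocommutative $C$; once that is in hand, the argument reduces to a one-line dimension count against any finite nontrivial fundamental group.
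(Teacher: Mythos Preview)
Your proposal is correct and follows essentially the same route as the paper: identify $\Omega C \cong U\mathcal{L}C$ for cocommutative $C$, pass to homology via Theorem~\ref{thrm:Universalenvelopecommuteswithhomology}, apply PBW to see that $H_0(\Omega C)$ has dimension $1$ or $\infty$, and then contradict Corollary~\ref{fundgroup} using a space with finite nontrivial fundamental group. Your choice of a concrete test space $B(\Z/2\Z)$ and your remark that functoriality plays no role are both accurate refinements of the paper's phrasing, but the substance of the argument is the same.
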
 

\begin{proof}
Suppose there exists such a functor $\mathcal{C}$ and let $(X,b) \in \text{Top}_*$. Lemma \ref{lem:cobaranduniversalenvelopingalgebra} says that there is an isomorphism of dg associative algebras $$\Omega \mathcal{C}(X,b) \cong U \mathcal{L} \mathcal{C}(X,b),$$ so the homologies are isomorphic as algebras $$H_*(\Omega \mathcal{C}(X,b)) \cong H_*(U\mathcal{L}\mathcal{C}(X,b)).$$
 By Theorem \ref{thrm:Universalenvelopecommuteswithhomology}, we have an isomorphism of associative algebras
$$H_*(U\mathcal{L}\mathcal{C}(X,b)) \cong UH_*(\mathcal{L}\mathcal{C}(X,b)).$$
The PBW Theorem (Theorem \ref{thrm:PBW}) gives an isomorphism of graded vector spaces,
  $$UH_*(\mathcal{L}\mathcal{C}(X,b)) \cong S H_*(\mathcal{L}\mathcal{C}(X,b)).$$ 
In particular,  $$H_0(\Omega \mathcal{C}(X,b)) \cong S H_0(\mathcal{L}\mathcal{C}(X,b)).$$ 

By Corollary \ref{fundgroup}, our assumption yields an isomorphism of algebras $$\mathbf{k}[\pi_1(X,b)] \cong H_0(\Omega C_*(X,b;\mathbf{k})) \cong H_0(\Omega \mathcal{C}(X,b)) \cong S H_0(\mathcal{L}\mathcal{C}(X,b))$$ for all pointed path-connected spaces $(X,b)$. Note that $S H_0(\mathcal{L}\mathcal{C}(X,b))$ is either one or infinite dimensional as a vector space. Since $\pi_1(X,b)$ can be any arbitrary group and, in particular, a finite group has finite dimensional group algebra, we obtain a contradiction. 
\end{proof}

\begin{remark}
We would like to point out that a similar argument also implies that there is no functor
\[
\mathcal{C}: \text{Top}_* \to C_{\infty,\mathbf{k}},
\]
where $C_{\infty,\mathbf{k}}$ denotes the category of $C_\infty$-coalgebras over $\mathbf{k}$. A $C_\infty$-coalgebra is a cocommutative coalgebra whose coassociativity is relaxed up to homotopy, i.e. it is a coalgebra whose binary coproduct is strictly cocommutative, but only coassociative up to a sequence of coherent higher homotopies. The difference with an $E_\infty$-coalgebra is that in an $E_\infty$-coalgebra both the cocommutativity and coassociativity are relaxed up to homotopy, while in a $C_\infty$-coalgebra only the coassociativity is relaxed up to homotopy (see Sections 13.1.9 and 13.1.10 of \cite{LV12} for more details). Since there is a morphism of operads $A_\infty\rightarrow C_\infty$, we can also use the cobar construction for $A_\infty$-coalgebras for $C_\infty$-coalgebras. This cobar construction is defined in a similar way as for cocommutative algebra, i.e. it is defined as the tensor algebra generated by the coaugmentation ideal, with a differential coming from the coalgebra structure. For a $C_\infty$-coalgebra $C$, this cobar construction again factors as $\Omega C \cong U \mathcal{L} C$ ,where $\mathcal{L}$ now denotes the $C_\infty$ analog of $\mathcal{L}$ and $U$ is the universal enveloping algebra. By the PBW theorem this algebra will always be the symmetric algebra on some vector space $V$ and is therefore either one dimensional or infinite dimensional and can never  model finite fundamental groups, therefore showing that there is no functor $\mathcal{C}$ from connected spaces to  $C_\infty$-coalgebras such that $\mathcal{C}(X,b)$ is $\Omega$-quasi-isomorphic to $C_*(X,b;\mathbf{k})$ as  $A_\infty$-algebras.
\end{remark}

\bibliographystyle{plain}

\Addresses

\end{document}